\newtheorem{thm}{Theorem}[section]
\newtheorem{claim}[thm]{Claim}
\theoremstyle{definition}
\theoremstyle{remark}
\newtheorem{remark}{Remark}[section]
    \def\HSt{%
       \setbox0=\hbox{$\widehat{\mathit{HS}}$}
       \setbox1=\hbox{$\mathit{HS}$}
       \dimen0=1.1\ht0
       \advance\dimen0 by 1.17\ht1
       \smash{\mskip2mu\raise\dimen0\rlap{%
          \begin{turn}{180}
              {$\widehat{\phantom{\mathit{HS}}}$}
           \end{turn}} \mskip-2mu    
                \mathit{HS}
    }{\vphantom{\widehat{\mathit{HS}}}}{}}
    \def\HMt{%
       \setbox0=\hbox{$\widehat{\mathit{HM}}$}
       \setbox1=\hbox{$\mathit{HM}$}
       \dimen0=1.1\ht0
       \advance\dimen0 by 1.17\ht1
       \smash{\mskip2mu\raise\dimen0\rlap{%
          \begin{turn}{180}
              {$\widehat{\phantom{\mathit{HM}}}$}
           \end{turn}} \mskip-2mu    
                \mathit{HM}
    }{\vphantom{\widehat{\mathit{HM}}}}{}}
\newcommand{\vol}{\mathrm{vol}}
\newcommand{\R}{\mathbb{R}}
\newcommand{\SW}{\mathsf{SW}}
\begin{document}

\title[Monopoles and the dodecahedron]{Monopole Floer homology, eigenform multiplicities and the Seifert-Weber dodecahedral space} 

\author{Francesco Lin}
\address{Department of Mathematics, Columbia University} 
\email{flin@math.columbia.edu}

\author{Michael Lipnowski}
\address{Department of Mathematics and Statistics, McGill University} 
\email{michael.lipnowski@mcgill.ca}
\begin{abstract}
We show that the Seifert-Weber dodecahedral space $\SW$ is an $L$-space. The proof builds on our work relating Floer homology and spectral geometry of hyperbolic three-manifolds. A direct application of our previous techniques runs into difficulties arising from the computational complexity of the problem. We overcome this by exploiting the large symmetry group and the arithmetic and tetrahedral group structure of $\SW$ to prove that small eigenvalues on coexact $1$-forms must have large multiplicity.
\end{abstract}
\maketitle

The Seifert-Weber dodecahedral space $\SW$ is obtained by identifying opposite faces of a dodecahedron by a $3/10$ full turn \cite{Thu}; it was one of the first examples of closed hyperbolic three-manifold to be discovered \cite{WS}. Despite its very simple description, it is a quite complicated space from the point of view of three-dimensional topology. For example, the conjecture of Thurston from $1980$ that $\SW$ is not Haken has been considered a benchmark problem in computational topology and took $30$ years to settle \cite{BRT}.
\\
\par
In the present paper, we look at $\SW$ from the point of view of monopole Floer homology \cite{KM}. Recall that $H_1(\SW)= \left(\mathbb{Z} / 5\mathbb{Z} \right)^3$, so that $\SW$ is a rational homology sphere.
\begin{thm}\label{main}
The Seifert-Weber dodecahedral space $\SW$ is an $L$-space, i.e. its reduced Floer homology $\mathit{HM}_*(\SW)$ vanishes.
\end{thm}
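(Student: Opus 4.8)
The plan is to reduce Theorem~\ref{main} to a single spectral estimate and then to prove that estimate by an equivariant argument. By our earlier work relating monopole Floer homology to the spectral geometry of hyperbolic three-manifolds, a closed hyperbolic rational homology sphere $Y$ is an $L$-space provided the Hodge Laplacian on coexact $1$-forms has no eigenvalue below an explicit threshold $\lambda_0$: such a spectral gap implies that a suitable perturbation of the Seiberg--Witten equations on $Y$ has no irreducible solutions, whence $\mathit{HM}_*(Y)=0$. Since $H_1(\SW)=(\mathbb{Z}/5\mathbb{Z})^3$ is finite, $\SW$ is a rational homology sphere with $\lambda_1^\ast(\SW)>0$, and the whole problem is to show $\lambda_1^\ast(\SW)>\lambda_0$, where $\lambda_1^\ast$ denotes the first positive eigenvalue on coexact $1$-forms.

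A direct rigorous lower bound for $\lambda_1^\ast(\SW)$ --- for instance by discretizing the eigenvalue problem on the dodecahedron --- runs into the same computational complexity that made the non-Hakenness of $\SW$ a benchmark problem: with $\vol(\SW)\approx 11.2$ the naive method does not terminate with a useful estimate. Instead I would use the three structural features of $\SW$ highlighted in the abstract. Its isometry group $G$ is large (arising from the symmetries of the dodecahedron compatible with the $3/10$-turn face identification), so each coexact eigenspace $E_\lambda\subset\Omega^1(\SW)$ is a $G$-representation. It is arithmetic, so coexact $1$-eigenforms of $\SW$ are described by automorphic representations of an underlying quaternion algebra, and the Hecke operators together with the Jacquet--Langlands correspondence both constrain which eigenforms occur and organize them into packets of large multiplicity. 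And it is commensurable with a tetrahedral Coxeter (simplex) group whose quotient orbifold has tiny volume, so that the relevant automorphic computations can actually be carried out on that small orbifold and transported back. Combining these inputs --- together with a direct check on the small quotient orbifolds that the low-dimensional isotypic pieces carry no eigenvalue below $\lambda_0$ --- shows that any coexact eigenvalue $\lambda\in(0,\lambda_0]$ of $\SW$ must occur with multiplicity at least some explicit integer $d$.

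The complementary ingredient is an upper bound on the number of small coexact eigenvalues of $\SW$. A heat-trace (or Selberg-trace-formula) computation, carried out isotypic component by isotypic component and using the short part of the length spectrum of $\SW$ together with the way $G$ permutes closed geodesics, bounds $\#\{\lambda_i\in(0,\lambda_0]\}$, counted with multiplicity, by a constant $N$. If $d>N$ this is incompatible with the previous paragraph, so $\SW$ has no coexact eigenvalue in $(0,\lambda_0]$; hence $\lambda_1^\ast(\SW)>\lambda_0$ and $\SW$ is an $L$-space.

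I expect the main obstacle to be making these two estimates meet. On the one hand, one must control the $G$-module structure of coexact $1$-forms, and the associated Hecke module, sharply enough --- this is precisely where the arithmetic and tetrahedral structure, and not merely the finite isometry group, is indispensable --- so that the forced multiplicity $d$ is genuinely large. On the other hand, the trace-formula bound $N$ must be driven below $d$, which is delicate since $\lambda_0$ is not small and the contributions of the shortest geodesics of $\SW$ have to be estimated rigorously rather than read off numerically. Proving that ``small coexact eigenvalues would be forced to have impossibly large multiplicity'' is the technical heart of the argument.
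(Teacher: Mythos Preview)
Your high-level strategy matches the paper's exactly: force a multiplicity lower bound $d$ for any small coexact eigenvalue via the isometry action, produce a trace-formula upper bound $N<d$ on the multiplicity, and conclude there are no eigenvalues below the threshold $\lambda_0=2$. In the paper $d=4$, and the upper bound is simply $J_{8,t}(\SW)<4$ for $t\leq 1.41438\ldots$, computed directly on $\SW$ from the length spectrum at cutoff $R=8$.

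Where you diverge is in the mechanism for the multiplicity bound. You invoke Hecke operators, Jacquet--Langlands, and automorphic packets; none of this appears or is needed. The paper's argument is elementary finite-group representation theory: the only irreducible $S_5$-representations of dimension $<4$ are the trivial and sign representations, and both restrict trivially to $A_5$. Hence $\dim V_{\lambda^\ast}\geq 4$ as soon as $V_{\lambda^\ast}$ has no $A_5$-invariant vector, i.e.\ as soon as $\lambda^\ast$ is not a coexact eigenvalue of the quotient orbifold $\SW/A_5$. The entire multiplicity claim thus reduces to proving $\lambda_1^\ast(\SW/A_5)>64$, which is done by running the same $J_{R,t}$ machinery on that orbifold. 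The arithmetic and tetrahedral structure enter only at this point, and for a prosaic reason: $\SW/A_5$ has non-cyclic isotropy, so SnapPy cannot compute its length spectrum, and one instead enumerates conjugacy classes number-theoretically via the identification $\Gamma^+\cong P\rho(\mathcal{O}^1)$ for a maximal order in an explicit quaternion algebra. Your ``direct check on the small quotient orbifolds'' is therefore not a side remark but the whole content of the multiplicity step, while the heavier automorphic apparatus you propose is a detour that would be both harder to make rigorous and unnecessary.
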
{More is true, in fact: our proof will show that, for all $\mathrm{spin}^c$-structures on $\SW$, small perturbations of the Seiberg-Witten equations on $M$ admit no irreducible solutions, and therefore $\SW$ is a \textit{minimal $L$-space} in the sense of \cite{LL}.
\\
\par
As a direct consequence of Theorem \ref{main}, we obtain that the Seifert-Weber dodecahedral space does not admit coorientable taut foliations \cite{KMOS}. Furthermore, as $\SW$ is also an arithmetic hyperbolic three-manifold of the simplest type with $H_1(\SW,\mathbb{Z}/2\mathbb{Z})=0$ (see Remark \ref{simplest}), the construction of \cite{AL} can be directly adapted to provide more examples of hyperbolic $4$-manifolds with vanishing Seiberg-Witten invariants.
\\
\par
Our approach to Theorem \ref{main} builds on the ideas of our previous work \cite{LL}. There, we showed that a hyperbolic rational homology sphere $Y$ for which the first eigenvalue on coexact $1$-forms $\lambda_1^*$ is strictly larger than $2$ is an $L$-space. We then developed numerical techniques (based on the Selberg trace formula) to provide explicit lower bounds on $\lambda_1^*$ in terms of the closed geodesics of $Y$. More specifically, taking as input the list of complex lengths of geodesics with length at most $R$ (as computed for example by SnapPy \cite{CDGW}), we determine a function $J_{R,t}(Y)$ which is an upper bound to the multiplicity of $t^2$ as an eigenvalue of $\Delta$ on coexact $1$-forms. In particular, if $J_{R,t}<1$ then $t^2$ is not an eigenvalue; using this, we showed that several manifolds with small volume ($\leq 2.03$) have $\lambda_1^*>2$, and are therefore $L$-spaces.
\\
\par
One can try to apply the same approach to $\SW$, which has significantly larger volume $\approx 11.119$. Using SnapPy, we computed the length spectrum up to cutoff $R=8$ in about $5$ hours. The function $J_{R,t}(\SW)$ for $R=8$ can be found in Figure \ref{SWgraph}; unfortunately, it only proves that $\lambda_1^*> 1.9188 \ldots$. 
\begin{figure}
  \includegraphics[width=0.8\linewidth]{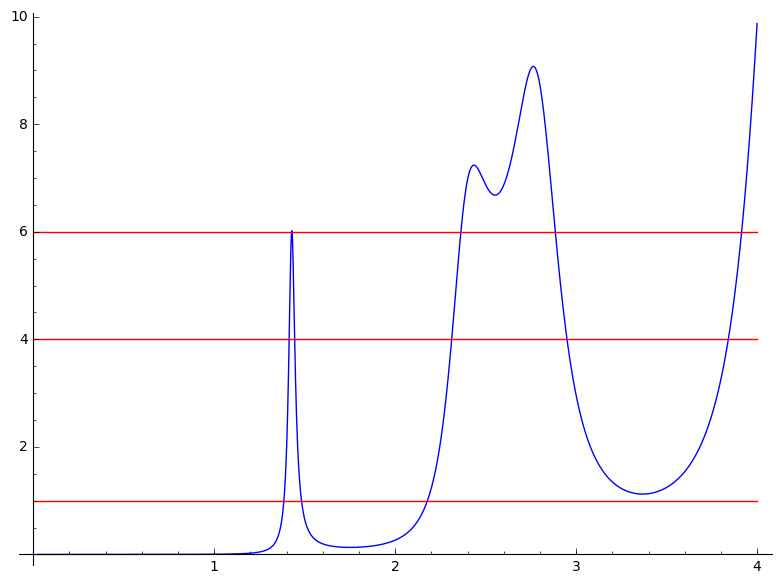}
  \caption{The graph of $t \mapsto J_{8,t}(\SW)$ for $t \in [0,4]$.}
  \label{SWgraph}
\end{figure}
On the other hand, the graph of $J_{8,t}$ is peaked just barely above height 6 in the narrow interval $[1.427877\ldots, 1.430337\ldots]$; this strongly suggests that
\begin{equation*}
\lambda_1^\ast \in [(1.427877\ldots)^2, (1.430337\ldots)^2] = [2.03883\ldots,2.04586\ldots]
\end{equation*}
and that the corresponding eigenspace has dimension $6$. As we expect $J_{R,t}$ to approximate better and better the indicator function of the spectrum (with multiplicities) for large $R$, one could in principle prove that $\lambda_1^*>2$ by showing that $J_{R,t}<1$ for $t<\sqrt{2}$ by computing the length spectrum for some larger value of $R$. In practice, this would require quite an extensive computation, because the amount of time to compute the length spectrum at cutoff $R$ grows at least exponentially in $R$, see Table \ref{table1}.

\begin{table}[h!]
 \begin{tabular}{|c| c| c|} 
 \hline
 Length cutoff $R$ & $\sqrt{2} - \text{height 1 crossing point}$ & Running time in CPU seconds \\ 
 \hline\hline
6.0 & 0.207732 \ldots &  12\\ 
 \hline
6.5 &  0.152766\ldots &  61\\ 
 \hline
7.0 &  0.096385\ldots &  348\\ 
 \hline
7.5 &  0.051723\ldots &  2255\\ 
 \hline
8.0 &  0.028979\ldots &  19112\\ 
 \hline
\end{tabular}
\caption{Differences between $\sqrt{2}$ and point where the graphs of $t \mapsto J_{R,t}$ cross height 1 for various length cutoffs $R$. Assuming the actual value of $\sqrt{\lambda_1^\ast}$ to be about $1.428..$, this suggest that a computation of length spectrum at cutoff $R=9.5$ could prove $\lambda^*_1>2$. Being quite optimistic (e.g. assuming that memory limitations do not affect the running time), such a computation would take at least several months. Here we used an Intel Core i7 2.7GHz with 10GB of allocated memory.}\label{table1}
\end{table}
\begin{remark}It should be pointed out that the computations for $\SW$ are \textit{extremely} fast (even though not enough for our purposes). For example, the computation at cutoff $R=6$ only took $12$ seconds, while for most of the other three-manifolds we tested before it took around $15-20$ minutes.
\end{remark}

We instead took a more conceptual approach.  Our main result is the following: 
\begin{claim}\label{multprop}
Any eigenvalue $\lambda^*\leq 64$ of the Hodge Laplacian on coexact $1$-forms on $\SW$ has multiplicity at least $4$.
\end{claim}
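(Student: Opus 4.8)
The plan is to exploit the large symmetry group of $\SW$ together with the structure of its isometry group acting on eigenspaces. Recall that $\SW$ has isometry group of order $60$, isomorphic to $A_5$ (in fact, the full symmetry group of the dodecahedron, modulo the central involution, acting on the quotient). The Hodge Laplacian on coexact $1$-forms commutes with this isometry action, so each eigenspace $E_{\lambda^*}$ is a real representation of $G \cong A_5$. The key observation is that $A_5$ has no nontrivial irreducible real representation of dimension less than $3$: its irreducible real representations have dimensions $1, 3, 3, 4, 5$ (the two $3$-dimensional representations being the icosahedral ones, which remain irreducible over $\R$, and all others being absolutely irreducible). So once we rule out the possibility that $E_{\lambda^*}$ contains the trivial representation, we will know $\dim E_{\lambda^*} \geq 3$, and a small additional argument should push this to $4$.

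Concretely, the first step is to identify which representations of $G$ can occur in the coexact $1$-form spectrum below $\lambda^* = 64$ (equivalently, below $\sqrt{\lambda^*} = 8$ in the relevant parameter). The trivial summand would correspond to an $A_5$-invariant coexact eigen-$1$-form; I would rule this out by a cohomological or averaging argument — for instance, by passing to the orbifold quotient $\SW/G$ (a small hyperbolic orbifold) and checking that its coexact $1$-form spectrum has no eigenvalue below $64$, possibly by the same Selberg-trace-formula numerics of \cite{LL} applied to the orbifold, whose much smaller "volume" makes $J_{R,t}<1$ achievable in the relevant range. This reduces the ambient problem from volume $\approx 11.1$ to something roughly $60$ times smaller, which is exactly where the numerical method from our earlier work becomes decisive. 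Alternatively, if one prefers to avoid orbifold trace formulas, one can work $G$-equivariantly on $\SW$ itself: the $J_{R,t}$-type bound refines to each isotypic component using the characters of $G$ evaluated on the geodesic length spectrum, and the trivial-isotypic bound should already drop below $1$ for our cutoff $R=8$.

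The second step handles the jump from $3$ to $4$. Here I would use the additional arithmetic/tetrahedral structure alluded to in the abstract: $\SW$ is arithmetic, and there is a Hecke action (or an extra symmetry coming from the commensurator, or from the tetrahedral group containing $A_5$ with index giving an $S_5$ or $A_5 \times \mathbb{Z}/2$ action) which, combined with the $A_5$-action, forces the relevant isotypic pieces to appear with even multiplicity or forces two of the $3$-dimensional icosahedral representations to be paired. Concretely, if the eigenspace were exactly one copy of a $3$-dimensional irreducible, one can try to derive a contradiction with the self-duality properties of the Hodge-star or with a quaternionic/complex structure on the eigenspace induced by the extra symmetry, upgrading the bound to $4$; a $3$-dimensional irreducible of $A_5$ together with an anticommuting involution would already give dimension $\geq 6$, while a $4$-dimensional or $5$-dimensional irreducible is automatically $\geq 4$. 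So the only case to exclude is "eigenspace $=$ a single $3$-dimensional irreducible with the extra symmetry acting trivially or by scalars", which the arithmetic structure should obstruct.

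The main obstacle I anticipate is the first step — genuinely certifying that the trivial-isotypic (equivalently, the $G$-invariant, equivalently the orbifold) part of the coexact spectrum has nothing below $64$. This is where a delicate numerical input is unavoidable: one must run the Selberg trace formula machinery either on the orbifold $\SW/G$ or in the $G$-equivariant refinement, and verify rigorously (with interval arithmetic) that the resulting counting function stays below $1$ for all $t \le 8$. Unlike the naive approach on $\SW$ itself, this should be computationally feasible precisely because dividing by $|G| = 60$ shrinks the effective volume and drastically thins the relevant contributions; still, getting a fully rigorous bound up to the fairly large cutoff $\lambda^* = 64$ (rather than just $\lambda^* = 2$) will require care about truncation errors and about the contribution of short geodesics whose powers feed into the trace formula.
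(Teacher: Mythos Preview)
Your overall strategy is right and matches the paper's approach: pass to the orbifold $\SW/A_5$, run the Selberg trace formula there to certify that it has no coexact eigenvalue $\leq 64$, and then use representation theory on the eigenspaces. But there is a factual error and a genuine gap in your second step.

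The factual error: the isometry group of $\SW$ is not $A_5$ of order $60$ but $S_5$ of order $120$ (Mednykh). The $A_5$ you describe is only the subgroup coming from the orientation-preserving isometries of the dodecahedron; there is an additional orientation-preserving ``inside-out'' isometry $\iota$ of order $2$ which, together with $A_5$, generates $S_5$. This is not merely cosmetic---it is exactly what closes the gap from $3$ to $4$.

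The gap: with only the $A_5$-action, your argument correctly yields $\dim E_{\lambda^*}\geq 3$ once the trivial isotypic is excluded, and you are then left speculating about Hecke operators, quaternionic structures, commensurator symmetries, or Hodge-star pairings to push to $4$. None of these is needed, and several would not work as stated (for instance, the Hodge star on coexact $1$-forms in dimension $3$ preserves each eigenspace rather than pairing distinct ones). The paper's argument is much cleaner: the eigenspace is a representation of the full isometry group $S_5$, and the irreducible representations of $S_5$ have dimensions $1,1,4,4,5,5,6$. Both $1$-dimensional irreducibles (trivial and sign) restrict to the trivial representation of $A_5$. Hence ruling out $A_5$-invariants---which is exactly what the orbifold computation on $\SW/A_5$ accomplishes---already rules out every $S_5$-irreducible of dimension $<4$, and $\dim E_{\lambda^*}\geq 4$ follows immediately. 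You do float ``$S_5$'' among your list of possible extra structures, but you should recognize that this is in fact \emph{the} isometry group and that it makes the entire second step a one-line representation-theoretic observation.
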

From this, we can prove Theorem \ref{main} by looking again at the function $J_{8,t}(\SW)$.
\begin{proof}[Proof of Theorem \ref{main}].
We have that $J_{8,t}(\SW)< 4$ for $t\leq 1.414380\ldots$ (see Figure \ref{SWgraphzoom}). This implies that $\lambda_1^*> (1.414380\ldots)^2 = 2.0004717\ldots > 2$, and Theorem \ref{main} follows from Theorem $0.3$ of \cite{LL}.
\end{proof}

\begin{figure}
  \includegraphics[width=0.8\linewidth]{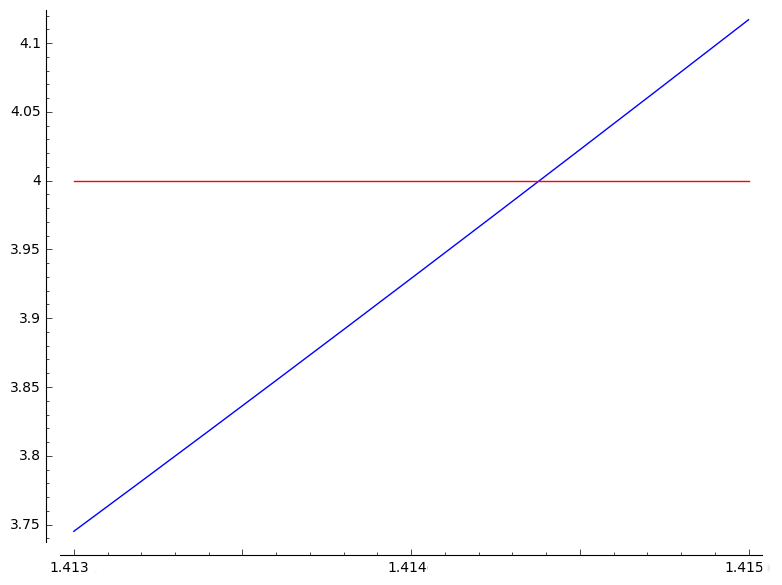}
  \caption{The graph of $t \mapsto J_{8,t}(\SW)$ for $t \in [1.413,1.415]$.}
  \label{SWgraphzoom}
\end{figure}
The rest of this paper is dedicated to the proof of Claim \ref{multprop}. Determining eigenvalue multiplicities is in general a very delicate problem, especially in the context of numerical computations.  There are two key observations about $\SW$ underlying Claim \ref{multprop}:
\begin{enumerate}
\item $\SW$ is a very symmetric manifold.  In particular, its isometry group is isomorphic to the symmetric group $S_5$ \cite{Med}. Here the alternating subgroup $A_5$ corresponds to the orientation preserving isometries of the dodecahedron.
\item while the orbifold $\SW/A_5$ is not implemented in SnapPy (which currently only has infrastructure to handle orbifolds with cyclic singularities), it admits a very explicit arithmetic description; in particular, one can use number theoretic techniques to compute its length spectrum (see Chapter $30$ of \cite{Voi}).
\end{enumerate}
By generalizing the techniques of \cite{LL} to the case of orbifolds, we can use the computation of the length spectrum (together with some additional geometric data) to show that the orbifold $\SW/A_5$ satisfies $\lambda_1^\ast >64$; this implies that the $\lambda^\ast$-eigenspace of $\SW$ for every $\lambda^\ast \leq64$, viewed as a representation of the isometry group $S_5,$ does not contain copies of the trivial representation of $A_5.$  From this we will be able to conclude Claim \ref{multprop} via the classification of irreducible representations of $S_5$.
\\
\par
Our approach to Theorem \ref{main} is based on many of the beautiful geometric and arithmetic properties of $\SW$. One would expect that the same result can be achieved by other means, given that there is an algorithmic (yet impractical) way to determine whether a given rational homology sphere is an $L$-space \cite{SW}. On the other hand, even though $\SW$ is a $5$-fold branched cover of the Whitehead link\footnote{More precisely, it is the one corresponding to the homomorphism $H_1(S^3\setminus L)\rightarrow \mathbb{Z}/5\mathbb{Z}$ sending one meridian to $1$ and the other to $2$.}, it does not admit a simple surgery description and is not the double branched cover of any link in $S^3$ (see Remark \ref{branchedS3}), so at least the most efficient computational tools available seem not to be directly applicable to it.
\\
\par
\textit{Plan of the paper.} In \S \ref{seifwebgeom} we discuss several geometric and arithmetic properties of $\SW$ which will be relevant for our purposes. In \S \ref{lower}, we generalize our techniques from \cite{LL} to the case of orbifolds and apply it to the case of $\SW/A_5$. Finally, in \S \ref{proofmult} we prove Claim \ref{multprop}, and and in \S\ref{icosahedron} we discuss a closely related tetrahedral orbifold. 
\\
\par
\textit{Acknowledgements.} The authors would like to thank Ian Agol for suggesting that the Seifert-Weber dodecahedral space could be an interesting example to test their techniques, and Aurel Page for sharing his code to compute length spectra of arithmetic orbifolds. The first author was partially supported by NSF grand DMS-1807242 and an Alfred P. Sloan fellowship.
\vspace{0.5cm}
\section{The geometry and arithmetic of $\SW$}\label{seifwebgeom}
\subsection{The isometry group.}We follow closely the discussion of \cite{Med}, to which we refer for additional details. As mentioned in the introduction, $\SW$ is obtained by identifying opposite faces of a dodecahedron by a $3/10$ full turn. Under this identification, all vertices of the dodecahedron get identified and the $30$ edges get identified in groups of five. The hyperbolic metric is realized by considering the regular dodecahedron $\mathsf{D}$ in $\mathbb{H}^3$ with dihedral angles of $2\pi/5=72^{\degree}$. The barycentric subdivision of $\mathsf{D}$ is made of $120$ copies of the the tetrahedron $\mathsf{T}$ with totally geodesic faces in Figure \ref{tetrahedron}.
\begin{figure}
  \centering
\def\svgwidth{\textwidth}
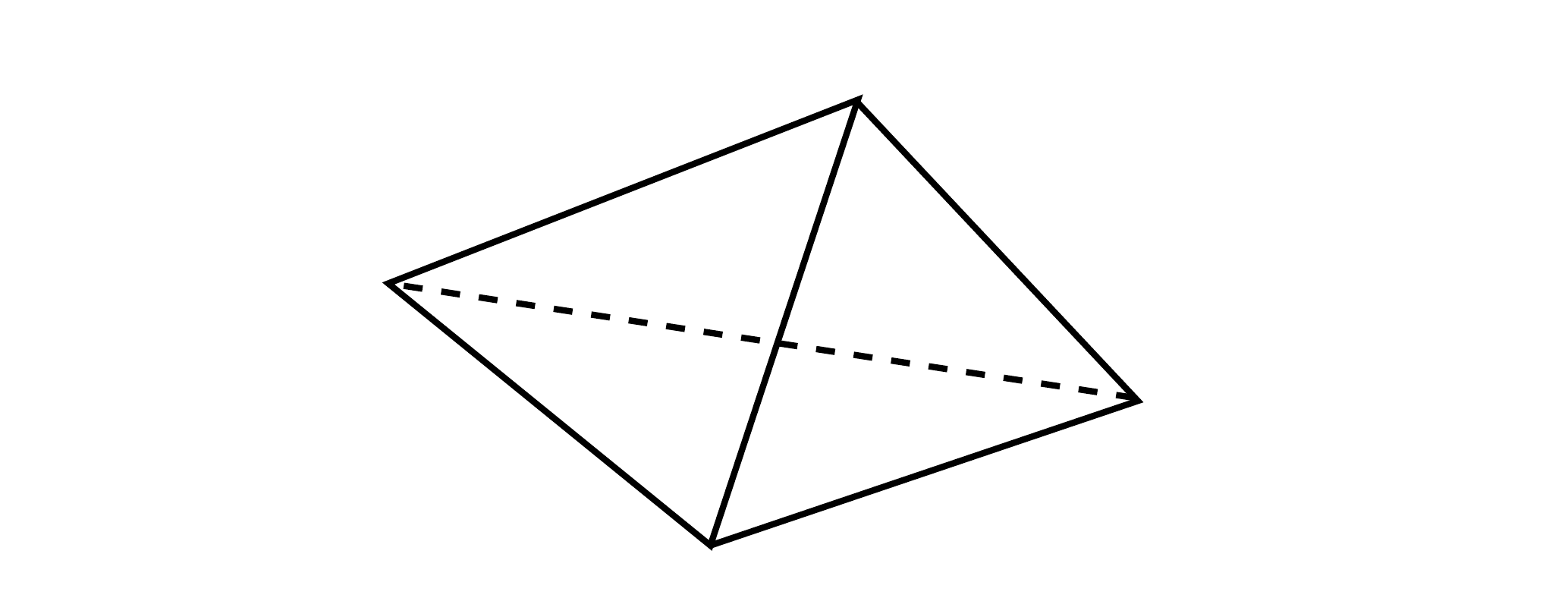
\caption{A schematic picture of the tetrahedron $\mathsf{T}$. An edge labeled $n$ has dihedral angle $\pi/n$. The vertices $O$, $V$, $E$ and $F$ are respectively the center of the dodecahedron $\mathsf{D}$, one of its vertices, the center of an edge and the center of a face. In Coxeter notation this is the tetrahedron $[5,3,5]$. The arrow denotes the orientation for the order $5$ rotation $c$.}
\label{tetrahedron}
\end{figure}

Denote by $\Gamma$ the group generated by reflections across the faces of $\mathsf{T}$, and by $\Gamma^+<\Gamma$ the index two subgroup consisting of orientation-preserving isometries. 
This has presentation 
\begin{equation}\label{prestetr}
\Gamma^+=\langle a,b,c\lvert a^2=b^2=c^5=(bc)^2=(ca)^3=(ab)^5=1\rangle
\end{equation}
where $a$, $b$ and $c$ are the rotations around the axes $VF$, $EF$ and $EV$ of angles $\pi$, $\pi$ and $2 \pi/5$ (the latter with orientation as in Figure \ref{tetrahedron}); see Section $4.7$ of \cite{MacReid}. In terms of reflections, defining for a vertex $P$ of $\mathsf{T}$ the reflection $R_P$ across the face opposite to $P$, we have
\begin{equation*}
a=R_ER_O,\quad b=R_VR_O,\quad c=R_FR_O.
\end{equation*}
Following \cite{Med}, the surjective homomorphism
\begin{align*}
\varphi:&\Gamma^+\twoheadrightarrow A_5\\
a\mapsto (23)(45)\quad b&\mapsto (12)(35)\quad c\mapsto (12345)
\end{align*}
has kernel identified with $\pi_1(\SW)$. In particular, we see that there is a natural action of $A_5$ on $\SW$ by isometries (corresponding to the isometry group of $\mathsf{D}$, which is isomorphic to $A_5$), and the quotient orbifold $\SW/A_5$ has fundamental group $\Gamma^+$. Furthermore, the fundamental domain for the action of $\Gamma^+$ on $\mathbb{H}^3$ is given by doubling $\mathsf{T}$ along any face; looking at the identifications, we see that the quotient $\SW/A_5$ is homeomorphic to $S^3$, and the orbifold locus is described again by Figure \ref{tetrahedron}, when thought of as a labeled trivalent graph in $S^3$. In particular the isotropy groups of the vertices $V$, $O$ are isomorphic to $A_5$ and the isotropy groups of $E$, $F$ are isomorphic to the dihedral group with $10$ elements $D_{10}$.
\\
\par
The tetrahedron $\mathsf{T}$ admits an incidence and edge label-preserving symmetry sending $V$ to $O$ and $E$ to $F$; because there is a unique hyperbolic tetrahedron with given dihedral angles, this symmetry is realized by an isometry $\iota$. Geometrically, $\iota$ is the rotation by $\pi$ along the geodesic connecting the midpoints of the segments $EF$ and $VO$ (such a geodesic is necessarily orthogonal to the edges at the endpoints). Since $\iota$ maps faces to faces, it normalizes $\Gamma$ and hence the orientation-preserving subgroup $\Gamma^+.$
Thus, the subgroup $\Lambda$ of $\mathrm{Isom}^+(\mathbb{H}^3)$ generated by $\Gamma^+$ and $\iota$ is the semidirect product of $\Gamma^+$ and the order two subgroup generated by $\iota.$  Furthermore, $\varphi$ can be extended to a homomorphism
\begin{equation*}
\tilde{\varphi}:\Lambda\rightarrow S_5
\end{equation*}
by sending $\iota$ to $x = (12)$. Indeed, extending to $\Lambda$ is equivalent to the relation $\varphi(\iota g \iota^{-1}) = x \varphi(g) x^{-1}$ for all $g \in \Gamma^+.$  It is readily checked that this relation holds for every $g$ among the generators $a,b,c$ of $\Gamma^+,$\footnote{For example, because $\iota$ conjugates $R_P$ to $R_{\iota(P)}$, it conjugates $a = R_E R_O$ to $R_F R_V = R_F R_0 R_0 R_V = cb^{-1}.$  So indeed,
$$\varphi(\iota a \iota^{-1}) = \varphi( cb^{-1}) = (1 3) (4 5) = x \varphi(a) x^{-1}.$$} which suffices.

In particular, $\iota$ normalizes the kernel of $\varphi$, and hence induces an isometry of $\SW$. This isometry switches $O$ and $V$, and will be referred to as the \textit{inside-out} isometry. The main theorem of \cite{Med} is then the following.
\begin{thm}[\cite{Med}]
The isometry group of $\SW$ is isomorphic to $S_5$, and is generated by $\iota$ and the orientation-preserving isometry group of the dodecahedron $\cong A_5$.
\end{thm}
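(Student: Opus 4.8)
The plan is to reduce the theorem, using only that $\SW$ is a closed hyperbolic manifold, to a computation of the normalizer of $\pi_1(\SW)$ inside $\mathrm{Isom}(\mathbb{H}^3)$, and then to pin that normalizer down using the arithmeticity of the $[5,3,5]$ tetrahedral group. Put $K:=\pi_1(\SW)=\ker\varphi\lhd\Gamma^+$ and $N:=N_{\mathrm{Isom}(\mathbb{H}^3)}(K)$. Every isometry of $\SW$ lifts to $\mathbb{H}^3$ and conjugates $K$ (the deck group of $\mathbb{H}^3\to\SW$) to itself, which yields a canonical isomorphism $\mathrm{Isom}(\SW)\cong N/K$. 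We have already produced, via $\tilde\varphi$, the subgroup $\Lambda=\Gamma^+\rtimes\langle\iota\rangle\leq N$ with $\Lambda/K\cong S_5$ acting by orientation-preserving isometries; so the theorem is equivalent to the single assertion $N=\Lambda$.

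The point is that $K\leq\Gamma^+\leq\Gamma$, where $\Gamma$ is the $[5,3,5]$ hyperbolic Coxeter tetrahedral group, and that $\Gamma$ is arithmetic with invariant trace field $\mathbb{Q}(\sqrt5)$ (cf. \cite{MacReid}, \S 4.7). Consequently $N$ is a lattice in the commensurability class of $\Gamma$, and this class is completely understood: by Borel's finiteness theorem it contains only finitely many maximal lattices up to conjugacy, with explicitly computable covolumes, and one finds that $\Gamma^\ast:=\Gamma\rtimes\langle\iota\rangle$ is the unique maximal lattice containing $\Lambda$, of covolume $\tfrac12\vol(\mathsf T)$, with $\Lambda=\Gamma^\ast\cap\mathrm{Isom}^+(\mathbb{H}^3)$ of index $2$. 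Since $\Lambda\leq N$ and $N$ is a lattice in the class, $N$ is contained in $\Gamma^\ast$; and as $[\Gamma^\ast:\Lambda]=2$, necessarily $N=\Lambda$ or $N=\Gamma^\ast$.

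It remains to rule out $N=\Gamma^\ast$, which would force $K$ to be normal in $\Gamma^\ast$, hence in $\Gamma$ — but $K$ is not normal in $\Gamma$. Indeed, from $a=R_ER_O$, $b=R_VR_O$, $c=R_FR_O$ and the fact that each reflection is an involution, conjugation by $R_O$ fixes $a$ and $b$ and carries $c$ to $c^{-1}$; so normality of $K$ in $\Gamma$ would induce an automorphism of $\Gamma^+/K\cong A_5$ fixing $\varphi(a)=(23)(45)$ and $\varphi(b)=(12)(35)$ and inverting $\varphi(c)=(12345)$. No such automorphism of $A_5$ exists: identifying $\mathrm{Aut}(A_5)$ with $S_5$, a permutation $\sigma$ commuting with both $(23)(45)$ and $(12)(35)$ must fix their distinct fixed points $1$ and $4$, hence fix $5$ (from commuting with $(23)(45)$) and then $2$ and $3$ (from commuting with $(12)(35)$), so $\sigma=\mathrm{id}$, whereas $(12345)\neq(12345)^{-1}$. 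Therefore $N=\Lambda$, and $\mathrm{Isom}(\SW)\cong\Lambda/K\cong S_5$, generated by the image of $\Gamma^+$ — the orientation-preserving symmetry group $A_5$ of $\mathsf D$ — together with the inside-out isometry $\iota$.

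The main obstacle is the middle step: establishing the stated structure of the commensurability class of the $[5,3,5]$ tetrahedron — in particular that $\Gamma^\ast=\Gamma\rtimes\langle\iota\rangle$ is the unique maximal lattice over $\Lambda$ — which is the covolume/quaternion-order bookkeeping underlying the containment $N\leq\Gamma^\ast$. Granting this, the remaining ingredients are either formal or amount to the short computation in $A_5$ above.
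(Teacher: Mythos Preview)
The paper does not itself prove this theorem; it is quoted from Mednykh \cite{Med}. So there is no ``paper's own proof'' to compare against, and your proposal stands or falls on its own.

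Your overall architecture is correct: $\mathrm{Isom}(\SW)\cong N/K$ with $N=N_{\mathrm{Isom}(\mathbb{H}^3)}(K)$, and since $\Lambda/K\cong S_5$ the content is exactly $N=\Lambda$. The elimination of $N=\Gamma^\ast$ via the $R_O$-conjugation and the $\mathrm{Aut}(A_5)=S_5$ centralizer computation is clean and correct.

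There is one factual slip: the invariant trace field of the $[5,3,5]$ group is \emph{not} $\mathbb{Q}(\sqrt 5)$ but the quartic field $k=\mathbb{Q}(\sqrt{-1-2\sqrt 5})$ with one complex place (see \S\ref{arith} of the paper, or \cite[Appendix 13.1]{MacReid}). This does not change your strategy, but it does change the bookkeeping you would have to carry out.

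The substantive gap is the one you flag yourself: the assertion that $\Gamma^\ast=\langle\Gamma,\iota\rangle$ is the \emph{unique} maximal lattice in the commensurability class containing $\Lambda$, so that $\Lambda\le N$ forces $N\le\Gamma^\ast$. This is not a formality. In Borel's description of maximal arithmetic Kleinian groups (cf.\ \cite[Ch.~11]{MacReid}), a given arithmetic lattice may sit inside several non-conjugate maximal ones, indexed by choices of local maximal orders; pinning down uniqueness here requires computing the type number of the relevant quaternion algebra over $k$, identifying $\Gamma^\ast$ with the normalizer $N(\mathcal{O})$ of a maximal order, and checking the relevant $2$-elementary quotient is trivial. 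For $[5,3,5]$ this is indeed known, but it is exactly the ``covolume/quaternion-order bookkeeping'' you defer, and without it the inference $N\le\Gamma^\ast$ is unjustified. As written, then, the argument is a correct reduction together with a correct endgame, with the decisive middle step asserted rather than proved.
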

\begin{remark}\label{branchedS3}
In \cite{Med}, the author also determines the action of $S_5$ on the first homology $H_1(\SW)$.  In particular, for the order $2$ elements in $S_5$ the quotient has non trivial $H_1$, and is therefore not $S^3$. By geometrization, $\SW$ is not the branched double cover of a link in $S^3$.
\end{remark}
\begin{remark}\label{simplest}
From our description it is clear that $\pi_1(\SW)$ is commensurable to a tetrahedral group, and is therefore arithmetic of the simplest type by \cite{MRtet}.
\end{remark}

\vspace{0.3cm}
\subsection{Covolume of centralizers.}\label{centr} For later applications, for a non-trivial element $\gamma\in\Gamma^+$ we need to understand the volume $\vol(\Gamma^+_{\gamma}\setminus G_{\gamma})$, where $G_{\gamma}$ and $\Gamma^+_{\gamma}$ are the centralizers of $\gamma$ in $G=\mathrm{PSL}_2(\mathbb{C})$ and $\Gamma^+$ respectively. In closed hyperbolic manifold groups, all elements are hyperbolic and the centralizer of $\gamma_0^n$ with $\gamma_0$ primitive is the cyclic group generated by $\gamma_0$, and therefore the quantity of interest is simply the translation length $\ell(\gamma_0)$. For the closed hyperbolic orbifold group $\Gamma^+,$ we need to consider two special classes of elements:
\begin{itemize}
\item elliptic elements;
\item \textit{bad} hyperbolic elements, i.e. hyperbolic elements whose axis is the fixed axis for some elliptic element.
\end{itemize}
To determine these quantities, we will refer to the picture of $\SW/A_5$ in Figure \ref{tetrahedron} (considered again as $S^3$ with orbifold locus).
\par
Up to conjugacy, the elliptic elements of order $5$ are rotations around $FO$ and $VE$.
As these two rotations are exchanged by the inside-out isometry, we need only perform the computations on the non-trivial conjugacy classes of powers of $\gamma_f$, a rotation of $2\pi/5$ around the geodesic $f$ in $\mathbb{H}^3$ connecting $F$ and $O$. Notice that $\gamma_f$ and $\gamma_f^{-1}$ are conjugate in $\Gamma^+$ because in the isotropy group of $F$, which is isomorphic to $D_{10}$, every order two element is conjugate to its inverse.
Therefore in $\Gamma^+$ there are exactly $4$ conjugacy classes of order $5$ elliptics, namely $\gamma_f$, $\gamma_f^2$ and their conjugates under $\iota$. Now, $G_{\gamma_f}$ consists of the elliptic elements with axis $f$ and the hyperbolic elements with axis $f$ which preserve the endpoints of $f$. Furthermore, $\Gamma^+_{\gamma_f}$ is the abelian group generated by $\gamma_f$ and a primitive hyperbolic element $h_f$ in $\Gamma^+$ with axis $f$ (and which preserves endpoints). The image of $f$ in $\SW/A_5$ is $|OF|$, a geodesic of mirrored-arc type, and therefore the translation length of $h_f$ is $2|OF|$.
This quantity can be determined by looking at the geometry of the triangle with vertices $E$, $F$ and $O$ as follows. Direct geometric considerations with the dodecahedron $\mathsf{D}$ show that
\begin{equation*}
\angle OFE=\pi/2, \qquad \angle OEF=\pi/5.
\end{equation*}
The angle $\angle EOF$ can be computed via formulas of spherical trigonometry using the fact that a small sphere centered at $O$ intersects $\mathsf{T}$ in a geodesic triangle with angles $\pi/2$, $\pi/3$ and $\pi/5$. We have
\begin{equation*}
\angle EOF=\arctan\left(\frac{\sqrt{5}-1}{2}\right).
\end{equation*}
Since $OFE$ spans a geodesic hyperbolic right triangle, we obtain
\begin{equation*}
\cosh|OF|=\frac{\cos(\angle OEF)}{\sin(\angle EOF)}=1.5388\dots
\end{equation*}
so that $|OF|=0.9963...$ and 
\begin{equation*}
\vol(\Gamma^+_{\gamma_f}\setminus G_{\gamma_f})=\frac{1}{5}\cdot\ell(h_v)=\frac{1}{5}\cdot 2\cdot |OF|=0.3985\dots
\end{equation*}
The same computation holds also for $\gamma_f^2$.
\par
There is exactly one conjugacy class of order $3$ elliptic elements corresponding to the rotation by $2\pi/3$ around the geodesic $v$ in $\mathbb{H}^3$ connecting $O$ and $V$, which we denote by $\gamma_v$. We see that $\gamma_v$ is conjugate to $\gamma_v^{-1}$ by looking at the isotropy group of $O$: this is isomorphic to $A_5$, and every order $3$ element in $A_5$ is conjugate to its inverse. Denoting by $h_v$ a primitive hyperbolic element in $\Gamma^+_{\gamma_v}$, computations analogous to the case of $\gamma_f$ show that
\begin{equation*}
\vol(\Gamma^+_{\gamma_v}\setminus G_{\gamma_v})=\frac{1}{3}\cdot\ell(h_v)=\frac{1}{3}\cdot 2\cdot |OV|=1.2685\dots
\end{equation*}
\bigskip

The case of the order 2 elements is somewhat more complicated. The isotropy groups of $E$ and $F$ are isomorphic to $D_{10}$, and therefore the three edges of $\mathsf{T}$ labeled with $2$ are the image in $\SW/A_5$ of fixed point set of a single order $2$ elliptic element.
Therefore, there is only one conjugacy class of such elements, given by the $\pi$ rotation around the geodesic $\gamma_e$ in $\mathbb{H}^3$ connecting $E$ and $O$. Denoting by $h_e$ a primitive hyperbolic element in $\Gamma^+_{\gamma_e}$, we have
\begin{equation*}
\ell(h_e)=2(|OE|+|EF|+|FV|)=7.5836\dots
\end{equation*}
Furthermore:
\begin{enumerate}
\item[(a)] the centralizer $G_{\gamma_e}$ has an extra connected component corresponding to hyperbolic elements with axis $e$ that switch the endpoints.
\item[(b)]The centralizer $\Gamma^+_{\gamma_e}$ contains the group generated by $\gamma_e$ and $h_e$ as an index $2$ subgroup; more specifically, it contains an extra involution commuting with $\gamma_e$.
This is given by another order two elliptic element in the isotropy group of $O$, corresponding to the fact that order $2$ elements in $A_5$ have centralizer isomorphic to the Klein four group.\end{enumerate}
Putting things together, we obtain
\begin{equation*}
\vol(\Gamma^+_{\gamma_e}\setminus G_{\gamma_e})=2\cdot\frac{1}{4} \ell(h_e)=3.7918\dots
\end{equation*}
where the factors of $2$ and $1/4$ take into account (a) and (b) respectively.
\\
\par
The case of a bad hyperbolic element $h$ is simpler, as in this case the quantity $\vol(\Gamma^+_h\setminus G_h)$ is the length $\ell(h)$ divided by the order of the subgroup of elliptics having the same axis at $h$. Only the case of order $2$ elements require some extra thought, and follows from the fact that the extra involution described in (b) above does not commute with the hyperbolic element.

\vspace{0.3cm}
\subsection{Arithmetic description. }\label{arith}
The group $\Gamma^+$ is a tetrahedral group (see Section $4.7.2$ of \cite{MacReid}), and admits the following arithmetic description (we refer the reader to Chapter $8$ of \cite{MacReid} for the relevant notions). Consider the number field $k=\mathbb{Q} \left( \sqrt{-1-2\sqrt{5}} \right)$, which has exactly one complex place. Consider the quaternion algebra $A$ over $k$ ramified exactly at the two real places, and let $\mathcal{O}$ be a maximal order in $A$. Under the complex embedding, we get the inclusion of the norm one elements
\begin{equation*}
\rho:\mathcal{O}^1\hookrightarrow \mathrm{SL}_2(\mathbb{C}),
\end{equation*}
and by projectivizing we obtain the arithmetic group $P\rho(\mathcal{O}^1)\subset \mathrm{PSL}_2(\mathbb{C})$. We have the following:
\begin{equation}\label{isomorphism}
\Gamma^+\cong P\rho(\mathcal{O}^1).
\end{equation}
The proof of this statement can be easily adapted from the analogous result for the tetrahedral group with Coxeter symbol $[3,5,3]$ in Section $11.2.5$ of \cite{MacReid}. First of all, (\ref{prestetr}) readily implies that $\Gamma^+=(\Gamma^+)^{(2)}$. Furthermore, $\Gamma^+$ is arithmetic with invariant trace field and quaternion algebras $k$ and $A$ (see Appendix $13.1$ of \cite{MacReid}).  By Corollary $8.3.3$ in \cite{MacReid}, $\Gamma^+=(\Gamma^+)^{(2)}\subset P\rho(\mathcal{O}^1)$ for some maximal order $\mathcal{O}$ (all of them are conjugate in our case), and the equality follows because the two groups have the same covolume.
\\

\subsubsection{Conjugacy class data for $P\rho(\mathcal{O}^1)$ by arithmetic methods}
While the orbifold corresponding to $\Gamma^+$ is not implemented in the software SnapPy because of its complicated orbifold singularities, the identification (\ref{isomorphism}) makes it feasible to compute the length spectrum of $\Gamma^+$ using techniques from number theory. The method is described in Chapter $30$ of \cite{Voi}, and has been implemented in PARI/GP by Aurel Page \cite{Page}. 

Let $\mathcal{O}$ be an order in a quaternion algebra $A$ over a number field $k$ with ring of integers $\mathbb{Z}_k.$  Given an element $\gamma\in \mathcal{O}^1$, $K = k(\gamma)$ is a quadratic extension of $k$ that embeds in $A.$  Furthermore, $S =K\cap \mathcal{O}$ is a quadratic $\mathbb{Z}_k$-order that embeds in $\mathcal{O}$. The key facts underlying the method are the following:
\begin{itemize}
\item
$\mathcal{O}^1$-conjugacy classes in $\mathcal{O}^1$ having the same characteristic polynomial as $\gamma$ (or equivalently conjugate to $\rho(\gamma)$ in $\mathrm{PSL}_2(\mathbb{C})$) are in bijection with $\mathbb{Z}_k$-algebra embeddings $\varphi:S \hookrightarrow \mathcal{O}$ up to $\mathcal{O}^1$-conjugation.

\item
$\gamma$ is primitive exactly when the embedding $S \hookrightarrow \mathcal{O}$ is \textit{optimal}, i.e. after extending $\varphi$ linearly to $K$, we have $\varphi(K)\cap \mathcal{O}=\varphi(S)$.

\item
the optimal embeddings $S \hookrightarrow \mathcal{O}$ up to $\mathcal{O}^1$-conjugation may be parametrized adelically.
\end{itemize}
These observations allow one to express the multiplicity of a given element $\mathbb{C}\ell(\gamma)$ in the complex length spectrum in terms of the class number of $S$ and purely local information like local embedding numbers; when $\mathcal{O}$ is a maximal order, the latter can be understood in a very explicit form which is directly computable in PARI/GP.

\vspace{0.5cm}
\section{The Selberg trace formula for coexact $1$-forms for $\SW/A_5$}\label{lower}
The explicit Selberg trace formula for coexact $1$-forms on a hyperbolic three-manifold can be readily generalized to the case of orbifolds. The main complication is the evaluation of the terms in the geometric side corresponding to conjugacy classes which are either elliptic or bad hyperbolic. We have the following.
\begin{thm}[Explicit Selberg trace formula for coexact 1-forms on closed hyperbolic 3-orbifolds]\label{selbergorb}
Let $O$ be a closed oriented hyperbolic three-dimensional orbifold, corresponding to a quotient $\mathbb{H}^3/\Gamma$. Denote by
$0<\lambda_1^*\leq \lambda_2^*\leq \cdots$ the spectrum of the Hodge Laplacian on coexact $1$-forms, and set $t_j=\sqrt{\lambda_j^*}$. Let $H$ be an even, smooth, compactly supported, $\R$-valued function on $\R.$  Then the following identity holds:
\begin{align}\label{traceformulaorbifold}
\left( \frac{1}{2} b_1(O) -\frac{1}{2} \right) \widehat{H}(0)+\frac{1}{2}\sum_{j=1}^{\infty}\widehat{H}(t_j) &= \frac{\mathrm{vol}(O)}{2\pi}\cdot \left(H(0)-H''(0) \right) \nonumber \\
&+\sum_{[\gamma]\neq 1}t(\gamma)\cdot\mathrm{vol}(\Gamma_{\gamma}\setminus G_{\gamma}) \cdot \frac{\mathrm{cos}(\mathrm{hol}(\gamma))}{|1-e^{\mathbb{C}\ell(\gamma)}| \cdot |1-e^{-\mathbb{C}\ell(\gamma)}|}H \left( \ell(\gamma) \right),
\end{align}
where: 
\begin{itemize}
\item $\widehat{H}(t) := \int_\mathbb{R} H(x) e^{ix \cdot t} dx$ is the Fourier transform of $H$;
\item $t(\gamma)=\frac{1}{2}$ if $\gamma$ is an elliptic element of order $2$, and is $1$ otherwise.
\end{itemize}
Furthermore, the formula holds even the class of less regular compactly supported functions described in \cite{LL}.
\end{thm}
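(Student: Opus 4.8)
The plan is to run the pre-trace-formula argument of \cite{LL}, the point being that the passage from a manifold to an orbifold only introduces torsion (elliptic) elements of $\Gamma$ and, along their fixed axes, \emph{bad} hyperbolic elements, while the rest of the argument is formally unchanged. First I would recall from \cite{LL} the representation-theoretic description of coexact $1$-forms on $\mathbb{H}^3$ and the associated Selberg--Harish--Chandra transform $H\mapsto k_H$, which produces an $\mathrm{Isom}^+(\mathbb{H}^3)$-equivariant smoothing kernel on the bundle of $1$-forms whose spherical transform is governed by $\widehat H$. Averaging $k_H$ over $\Gamma$ gives a kernel on the orbifold $O=\mathbb{H}^3/\Gamma$; because $\Gamma$ is cocompact, albeit with torsion, this average is locally finite and smooth, and the Hodge Laplacian on coexact $1$-forms over $O$ has discrete spectrum of Weyl growth, so the associated operator is trace class and its trace may be computed either spectrally or geometrically.

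The spectral side reproduces the left-hand side of \eqref{traceformulaorbifold} exactly as in \cite{LL}: the sum $\tfrac12\sum_j\widehat H(t_j)$ comes from the positive spectrum, and the constant $\bigl(\tfrac12 b_1(O)-\tfrac12\bigr)\widehat H(0)$ from the de Rham/Hodge bookkeeping of the kernel and of the harmonic forms, which is purely formal and so carries over verbatim with $b_1(O)$ in place of the first Betti number of a manifold. On the geometric side one expands the trace as $\sum_{[\gamma]}\mathrm{vol}(\Gamma_\gamma\backslash G_\gamma)\cdot\mathcal O_\gamma(k_H)$, where $\mathcal O_\gamma(k_H)=\int_{G_\gamma\backslash G}k_H(g^{-1}\gamma g)\,d\dot g$ is the orbital integral, the class $[\gamma]=1$ contributing the identity/Plancherel term $\tfrac{\mathrm{vol}(O)}{2\pi}\bigl(H(0)-H''(0)\bigr)$ precisely as before.

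The crux is the evaluation of $\mathcal O_\gamma(k_H)$ for $\gamma\neq 1$, in three cases. For a generic hyperbolic $\gamma$ this is the computation of \cite{LL}, giving $\tfrac{\cos(\mathrm{hol}(\gamma))}{|1-e^{\mathbb{C}\ell(\gamma)}|\,|1-e^{-\mathbb{C}\ell(\gamma)}|}H(\ell(\gamma))$ with $\mathrm{vol}(\Gamma_\gamma\backslash G_\gamma)=\ell(\gamma_0)$. For a \emph{bad} hyperbolic $\gamma$ the integral $\mathcal O_\gamma(k_H)$ is literally the same function of $\gamma$, since it depends only on the $G$-conjugacy class, equivalently the complex length, of $\gamma$; the sole change is the scalar $\mathrm{vol}(\Gamma_\gamma\backslash G_\gamma)$, which is computed in \S\ref{centr} (the delicate point there being order-two axes). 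For an elliptic $\gamma$ the centralizer $G_\gamma$ is again a maximal torus, so the parametrization of $G_\gamma\backslash G$ and the reduction of $\mathcal O_\gamma(k_H)$ to a one-dimensional integral evaluated via the same transform relating $k_H$ to $H$ are unchanged from the hyperbolic case; carrying this out with $\gamma$ elliptic, so that $\ell(\gamma)=0$ and $|1-e^{\mathbb{C}\ell(\gamma)}|\,|1-e^{-\mathbb{C}\ell(\gamma)}|=2(1-\cos(\mathrm{rot}\,\gamma))>0$, again produces the stated term with $H(\ell(\gamma))=H(0)$; one can also view this as the analytic continuation of the hyperbolic orbital integral in the complex-length parameter, regular away from $2\pi i\mathbb{Z}$. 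When $\gamma$ has order two, $G_\gamma$ acquires a second connected component, since a $\pi$-rotation commutes with the isometries reversing its axis, and $\mathcal O_\gamma(k_H)$ picks up a factor $\tfrac1{|\pi_0(G_\gamma)|}=\tfrac12$; this is exactly the $t(\gamma)$ of the formula, reconciling the two-component $\mathrm{vol}(\Gamma_\gamma\backslash G_\gamma)$ with the identity-component computation.

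I expect the main obstacle to be precisely this elliptic orbital integral together with the attendant normalization bookkeeping — pinning down $\mathrm{hol}(\gamma)$, the Haar measures, and the component count for order-two elements, all consistently with the hyperbolic terms — with the uniformity of the final formula serving to keep the case analysis under control. Finally, the passage from smooth compactly supported $H$ to the less regular admissible class of \cite{LL} follows by the same approximation argument: both sides depend continuously on $H$ in the relevant topology — the geometric side is a finite sum over the $[\gamma]$ with $\ell(\gamma)\le\sup\mathrm{supp}(H)$, of which there are finitely many, and in particular only finitely many elliptic classes, while the spectral side converges by the orbifold Weyl law — so the identity passes to the limit exactly as in \cite{LL}.
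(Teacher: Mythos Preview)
Your proposal is correct and follows essentially the same approach as the paper: adapt the manifold trace formula of \cite{LL} verbatim, noting that the orbital integral depends only on the $G$-conjugacy class, and isolate the order-two elliptic case where $G_\gamma$ is disconnected, so that integration over $G_\gamma\backslash G$ rather than $G_\gamma^0\backslash G$ produces the factor $t(\gamma)=\tfrac12$. Your write-up is in fact more detailed than the paper's own proof, which dispatches the whole thing in a short paragraph pointing to \cite{LL} and the $\pi_0(G_\gamma)$ observation.
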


\begin{remark}
In the trace formula \eqref{traceformulaorbifold}, if $\gamma$ is elliptic, the term $ \frac{\mathrm{cos}(\mathrm{hol}(\gamma))}{|1-e^{\mathbb{C}\ell(\gamma)}| \cdot |1-e^{-\mathbb{C}\ell(\gamma)}|}H \left( \ell(\gamma) \right)$ reduces to $ \frac{\mathrm{cos}(\mathrm{hol}(\gamma))}{|1-e^{i \cdot \mathrm{hol}(\gamma)}| \cdot |1-e^{-i \cdot \mathrm{hol}(\gamma)}|}H(0).$
\end{remark}
\bigskip

Note that if $\gamma$ is a good hyperbolic element, then
\begin{equation*}
\mathrm{vol}(\Gamma_{\gamma}\setminus G_{\gamma})=\ell(\gamma_0)
\end{equation*}
where $\gamma_0$ is a primitive element of which $\gamma$ is a multiple. In particular, Theorem \ref{selbergorb} is a direct generalization of Theorem $0.4$ of \cite{LL}. In the case of the orbifold $\SW/A_5$, we determine these volume terms for the elliptic and bad hyperbolic elements in Section \ref{centr}.
\begin{proof}[Proof of Theorem \ref{selbergorb}]
The proof of the formula for manifolds in \cite{LL} adapts directly to the case of orbifolds, provided that there are no elements of order $2$. In particular, each of the terms in the second line corresponds to a quantity of the form
\begin{equation*}
\mathrm{vol}(\Gamma_{\gamma}\setminus G_{\gamma})\int_{G_{\gamma}\setminus G} f(g^{-1}\gamma g) \frac{dg}{dg_{\gamma}}
\end{equation*}
for some suitable $f$. When $\gamma$ is not an elliptic element of order $2$, $G_{\gamma}$ is the connected group corresponding of hyperbolic elements sharing the axis of $\gamma$ and fixing its endpoints, and we proved in \cite{LL} that the integral term (after additional work) is of the form in the theorem. In the case of an elliptic element of order $2$, $G_{\gamma}$ has an additional connected component corresponding to elements preserving the same axis but exchanging the two points at infinity.  The integrand $g \mapsto f(g^{-1} \gamma g)$ is invariant under the action of the order 2 group $G_\gamma^0 \backslash G_\gamma,$ and so integrating over $G_{\gamma} \backslash G,$ which equals the $G_{\gamma}^0 \backslash G_\gamma$ quotient of $G_\gamma^0 \backslash G,$ introduces the claimed factor of $1/2$.
\end{proof}
\vspace{0.5cm}

\section{Proof of Claim \ref{multprop}}\label{proofmult}

\begin{figure}
  \includegraphics[width=0.8\linewidth]{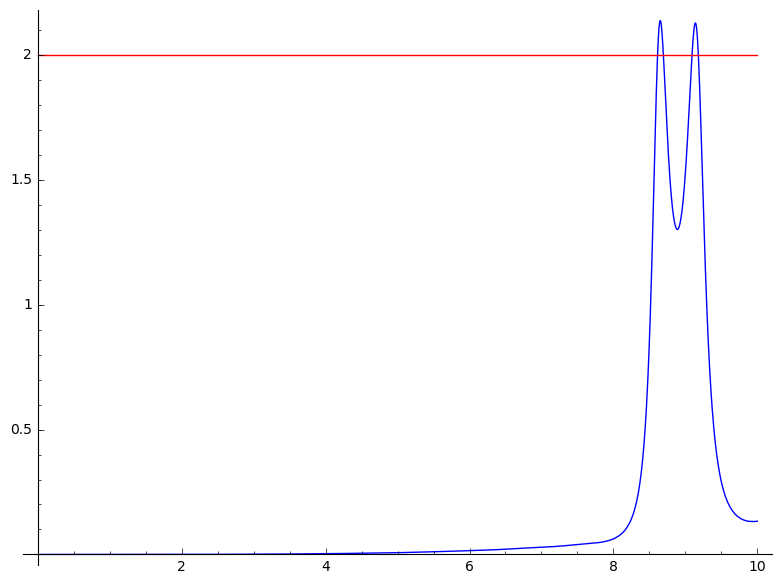}
  \caption{The graph of $t \mapsto J_{8,t}(\SW/A_5)$ for $t \in [0,10]$.}
  \label{SWA5graph}
\end{figure}
In \cite{LL}, we adapted the techniques of Booker and Strombergsson \cite{BS} to provide precise lower bounds for $\lambda_1^*$ of a hyperbolic three manifold $Y$ in terms of its length spectrum. In particular, given the length spectrum of $Y$ up to cutoff $R$, we produced a function $J_{R,t}(Y)$ giving an upper bound to the multiplicity of $t^2$ as an eigenvalue. The same approach (which is based on the Selberg trace formula) can be readily adapted to the case of orbifolds using Theorem \ref{selbergorb} provided we additionally know exactly the list of elliptic and bad hyperbolic elements, together with the associated quantities $\mathrm{vol}(\Gamma_{\gamma}\setminus G_{\gamma})$.
\\
\par
In our case of interest $\SW/A_5$, the length spectrum can be computed via the arithmetic description is \S\ref{arith} using the code of Aurel Page \cite{Page}. The elliptic conjugacy classes were determined in Section \S\ref{centr}. Given an elliptic element of order $n$, there are, up to taking inverses, exactly $n$ primitive bad hyperbolic elements sharing the same axis (with holonomies differing by $2\pi k/n$, for $k=0,1,\dots,n-1$); their length was determined in \S\ref{centr}, and their holonomies are
\begin{itemize}
\item $0,\pi$ when $n=2$;
\item $0, 2\pi/3, 4\pi/3$ when $n=3$;
\item $\pi/5, 3\pi/5, \pi, 7\pi/5, 9\pi/5$ when $n=5$.
\end{itemize}
This can be seen either directly via a geometric argument, or by looking at the length spectrum, as in our specific case these are exactly the holonomies that appear for the lengths of the bad hyperbolics. Finally, the covolumes of the centralizers were determined in \S\ref{centr}.
\\
\par
Using a cutoff $R=8$ we obtain the function $J_{8,t}(\SW/A_5)$ in Figure \ref{SWA5graph}. In particular, $J_{8,t}(\SW/A_5)\leq 1$ for $t\leq 8$, so that $\SW/A_5$ has no eigenvalues $\leq64$. Eigenforms on $\SW/A_5$ correspond to eigenforms on $\SW$ which are invariant under the action of $A_5$ by isometries. In particular, we obtain that for any $\lambda^*$-eigenspace $V_{\lambda^*}$ of $\SW$ for $\lambda^*\leq 64$, there are no copies of the trivial $A_5$-representation. Now $V_{\lambda^*}$ is a finite dimensional representation of the full isometry group $S_5$. The only irreducible representations of $S_5$ of dimension $<4$ are the trivial representation and the sign representation (Section $3.1$ of \cite{FulHar}); both of these representations restrict to the trivial representation of $A_5$. Therefore any $V_{\lambda^*}$ with $\lambda^*\leq64$ is at least $4$-dimensional, and Claim \ref{multprop} follows.

\section{Another tetrahedral orbifold}\label{icosahedron}
We can adapt our discussion to the closely related tetrahedral orbifold $\mathsf{T}'$ with Coxeter symbol $[3,5,3]$. This is obtained from Figure \ref{tetrahedron} by switching the label $3$ to $5$ and viceversa. This is known to be the smallest arithmetic Kleinian group of the form $P\rho(\mathcal{O}^1)$ (see Section $11.7$ in  \cite{MacReid}). The arithmetic description is provided in Section $11.2.5$ of \cite{MacReid}; in particular, it can be identified with $P\rho(\mathcal{O}^1)$ for a maximal order $\mathcal{O}$ in a quaternion algebra over the number field $\mathbb{Q}(\sqrt{3-2\sqrt{5}})$ ramified exactly at the two real places. Our geometric approach from Section \ref{centr} to determine elliptic and bad hyperbolic elements, and their relevant geometric quatities, can be readily adapted to this case, and we obtain lower bounds for the first eigenvalue on coexact $1$-forms as in Figure \ref{fibmodA5graph}.

\begin{figure}
  \includegraphics[width=0.8\linewidth]{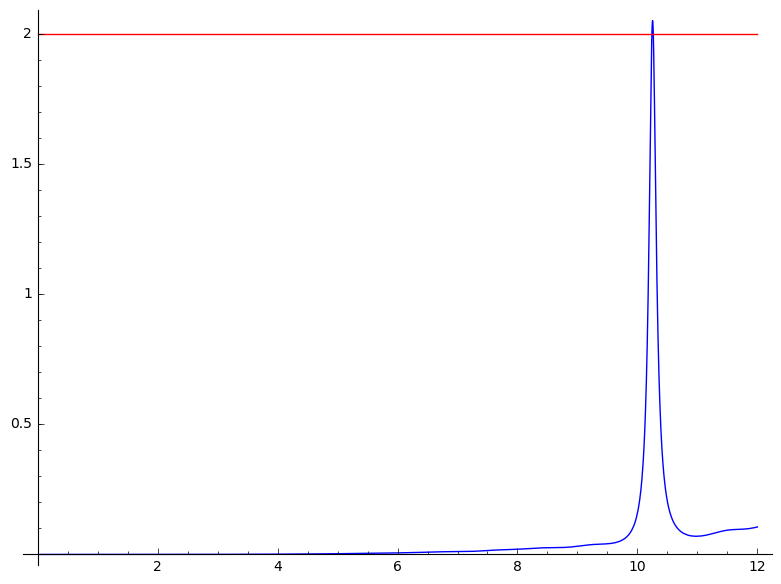}
  \caption{The graph of $t \mapsto J_{8,t}(\mathsf{T}')$ for $t \in [0,10]$.}
  \label{fibmodA5graph}
\end{figure}

\begin{remark}
Both $\SW/A_5$ and $\mathsf{T}'$ admit an orientation reversing isometry $r$ corresponding to the fact that they are the index $2$ subgroups of orientation preserving isometries in a Coxeter group. Geometrically, $r$ is obtained by reflecting along any of the faces of the tetrahedron. This implies that the eigenspaces of the Laplacian on coexact $1$-forms $\Delta$ are even dimensional (which is nicely consistent with Figure \ref{SWA5graph} and \ref{fibmodA5graph}). This is because $\Delta$ acts as $(\ast d)^2$ on coexact $1$-forms, and the action of $r$ on the $\lambda_*$-eigenspace of $\Delta$ exchanges the $\pm\sqrt{\lambda_*}$ eigenspaces of $\ast d$.
\end{remark}

\vspace{0.5cm}


\begin{thebibliography}{40}

\bibitem{AL} I. Agol, F. Lin. \emph{Hyperbolic $4$-manifolds with vanishing Seiberg-Witten invariants.} preprint.

\bibitem{BS} A. Booker, A. Strombergsson. \emph{Numerical computations with the trace formula and the Selberg eigenvalue conjecture.} J. Reine Angew. Math. 607 (2007), 113-161. 

\bibitem{BRT} B. Burton, H. Rubinstein, S. Tillmann. \emph{The Weber-Seifert dodecahedral space is non-Haken.} Trans. Amer. Math. Soc. 364 (2012), no. 2, 911-932. 

\bibitem{CDGW}
M. Culler, N. Dunfield, M. Goerner, J. Weeks. \emph{SnapPy, a computer program for studying the geometry and topology of 3-manifolds.}

\bibitem{FulHar}
W. Fulton, J. Harris. \emph{Representation theory. A first course.} Graduate Texts in Mathematics, 129. Springer-Verlag, New York, 1991.

\bibitem{KM} P. Kronheimer, T. Mrowka. \emph{Monopoles and three-manifolds.} New Mathematical Monographs, 10. Cambridge University Press, Cambridge, 2007.

\bibitem{KMOS}
P. Kronheimer, T. Mrowka, P. Ozsvath, Z. Szabo. \emph{Monopoles and lens space surgeries.} Ann. of Math. (2) 165 (2007), no. 2, 457-546. 

\bibitem{LL}
F. Lin, M. Lipnowski. \emph{The Seiberg-Witten equations and the length spectrum of hyperbolic three-manifolds.} preprint.

\bibitem{MRtet}
C. Maclachlan, A. Reid. \emph{The arithmetic structure of tetrahedral groups of hyperbolic isometries.}  Mathematika 36 (1989), no. 2, 221-240 (1990). 

\bibitem{MacReid}
C. Maclachlan, A. Reid. \emph{The arithmetic of hyperbolic 3-manifolds.} Graduate Texts in Mathematics, 219. Springer-Verlag, New York, 2003.

\bibitem{Med}
A. Mednykh. \emph{The isometry group of the hyperbolic space of a Seifert-Weber dodecahedron.} Siberian Math. J. 28 (1987), no. 5, 798-806.

\bibitem{Page}
A. Page. Private communication.

\bibitem{SW}
S. Sarkar, J. Wang. \emph{An algorithm for computing some Heegaard Floer homologies.} Ann. of Math. (2) 171 (2010), no. 2, 1213-1236. 

\bibitem{Thu} W. Thurston. \emph{Three-dimensional geometry and topology. Vol. 1.} Edited by Silvio Levy. Princeton Mathematical Series, 35. Princeton University Press, Princeton, NJ, 1997.


\bibitem{Voi} J. Voight. \emph{Quaternion algebras.}

\bibitem{WS} C. Weber, H. Seifert. \emph{Die beiden Dodekaederr�ume.} Math. Z. 37 (1933), no. 1, 237-253.


\end{thebibliography}
\end{document}